\theoremstyle{plain}
\newtheorem{thm}{Theorem}[section]
\newtheorem*{thm*}{Theorem}
\newtheorem{prop}[thm]{Proposition}
\newtheorem{cor}[thm]{Corollary}
\theoremstyle{remark}
\newtheorem{remark}[thm]{Remark}
\newcommand{\mb}{\mathbb}
\newcommand{\C}{\mb C}
\newcommand{\Pj}{\mb P}
\DeclareMathOperator{\im}{Im}
\numberwithin{equation}{section}
\numberwithin{equation}{section}
\author{Maycol Falla Luza} 
\address{Universidade Federal Fluminense, Brazil. }
\email{hfalla@id.uff.br}
\author{Frank Loray} 
\address{Univ Rennes, CNRS, IRMAR, UMR 6625, F-35000 Rennes, France. }
\email{frank.loray@univ-rennes.fr}
\author{Paulo Sad} 
\address{Instituto de Matematica Pura e Aplicada, Brazil. }
\email{sad@impa.br}
\begin{document}
\title{Non-algebraizable neighborhoods of curves}
\date{}
\begin{abstract}
We provide several families of compact complex curves embedded in smooth complex surfaces such that
no neighborhood of the curve can be embedded in an algebraic surface. Different constructions are proposed, 
by patching neighborhoods of curves in projective surfaces, and blowing down exceptional curves. 
These constructions generalize examples recently 
given by S. Lvovski. One of our non algebraic argument is based on an extension theorem
of S. Ivashkovich.
\end{abstract}

\maketitle
\tableofcontents

\section{Introduction}

This note is a complement to a recent nice paper \cite{Lvov} of S. Lvovski, where examples of positive embeddings of $\mb P^1$ in non algebraic surfaces are presented. A little earlier, a class of such examples appeared in \cite{FL2}: for each integer $m \geq 1$, an embedding of $C \simeq \mb P^1$ into some smooth surface is constructed, in such
a way as to have the self-intersection number $C\cdot C = m$, and the field of meromorphic functions of the surface is equal to $\mb C$; in particular, these surfaces are not algebraic. Then, in \cite{Lvov}, special embeddings of $\mb P^1$ in non algebraic surfaces are also constructed, but this time, the field of meromorphic functions has degree of transcendence $2$ over $\mb C$. We generalize here this construction by exhibiting examples of compact curves, poossibly singular, embedded in non-algebraizable surfaces with positive self-intersection numbers.

We use the same terminology as in  \cite{Lvov}: a smooth holomorphic surface $S$ is said {\it algebraizable} when there exists a holomorphic embedding of $S$ into a smooth projective surface $\hat{S}$; otherwise we say it is a non-algebraizable surface.

The paper is organized as follows. In Section \ref{Examples-in-positive}, we state our first main theorem, which describes how a neighborhood of a curve of the projective plane looses his algebraic character after a glueing with a neighborhood of a compact curve contained in some surface. Our main tool is a remarkable Levi’s extention type theorem due to S. Ivashkovich \cite{Iv}. 
Then, in Section \ref{Field-meromorphic}, we analyse several examples in order to understand the field of meromorphic functions of the resulting surfaces, we state here our main result: 

\begin{thm}\label{Ivas-trdeg2}
Let $C$ be a curve of degree $d\ge2$ in $\mb P^2$.
Then there exists a non algebraizable surface $S$ containing a copy of $C$
with $C\cdot C=d^2+k$ for any $k>0$. Moreover, the surface can be constructed in order to have field of meromorphic
functions $\C(S)$ with transcendence degree $0$, $1$ or $2$.
\end{thm}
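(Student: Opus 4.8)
The plan is to build $S$ by the patching construction of Section~\ref{Examples-in-positive} and to read off both the self-intersection and the transcendence degree from the two pieces being glued. Concretely, I would realize the prescribed copy of $C$ as a curve whose normal bundle has degree $d^2+k$, sitting inside a neighborhood $U$ that is pseudoconcave (this holds because $d^2+k>0$), and then patch $U$, across an annular shell $U\setminus C$, with a neighborhood of a compact curve $C'$ living in a suitably chosen auxiliary surface $Y$. This is exactly the hypothesis of the first main theorem of Section~\ref{Examples-in-positive}, so that theorem---whose non-algebraizability conclusion rests on Ivashkovich's extension theorem \cite{Iv}---immediately yields that $S$ is non-algebraizable. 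The mechanism to keep in mind is that an embedding $S\hookrightarrow\hat S$ into a smooth projective surface would, via the extension theorem, force the covering family of deformations of $C$ inside the $\mb P^2$-piece to continue meromorphically across the gluing locus into $Y$; this propagates the rational structure of $\mb P^2$ onto $Y$ and contradicts the choice of $Y$.

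Next I would arrange the self-intersection. In $\mb P^2$ the curve $C$ has $C\cdot C=d^2$ because $N_C=\Oo_C(d)$ has degree $d^2$; to raise this to $d^2+k$ I would perform a positive elementary modification along $C$ concentrated at $k$ generic smooth points, replacing $N_C$ by $\Oo_C(d)\otimes\Oo_C(p_1+\dots+p_k)$, of degree $d^2+k$. Equivalently, and this is the ``blowing down'' viewpoint of the introduction, I would produce the neighborhood as the contraction of a negative rational configuration in an auxiliary projective surface carrying $C$ with self-intersection $d^2+k$. Since $k>0$ is arbitrary, every value $d^2+k$ is realized, and as the modification is local near $C$ it does not interfere with the patching shell.

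To control $\operatorname{trdeg}\C(S)\in\{0,1,2\}$ I would vary the auxiliary surface $Y$ and the gluing map. For transcendence degree $2$ I would take $Y$ rational, as in Lvovski's examples \cite{Lvov}, so that a pencil of curves survives the patching and furnishes two algebraically independent meromorphic functions on $S$. For transcendence degree $1$ I would take $Y$ ruled over a base curve of genus $\ge 1$, so that exactly the functions pulled back from the base survive, giving a one-dimensional field. For transcendence degree $0$ I would take $Y$ with $\C(Y)=\C$ (the situation of \cite{FL2}) and glue so that no non-constant meromorphic function extends across the shell, forcing $\C(S)=\C$.

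The main obstacle is twofold. The delicate analytic point is the non-algebraizability itself: one must exclude \emph{every} projective algebraization at once, and this is precisely what Ivashkovich's extension theorem supplies, since no elementary argument rules out exotic compactifications---only the forced continuation of the deformation family of $C$ across the pseudoconcave shell gives the needed rigidity. The delicate bookkeeping point is computing $\operatorname{trdeg}\C(S)$ exactly: producing the expected meromorphic functions is routine, but bounding the field \emph{from above}---showing that no unexpected functions are created and that the expected ones genuinely extend across the annulus $U\setminus C$---requires a careful extension/restriction analysis of meromorphic functions on the shell, and this is where the argument must be made most carefully.
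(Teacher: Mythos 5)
There are genuine gaps here, and the most serious one is in the part of the statement you flag as ``delicate bookkeeping''. First, a structural problem: you propose to \emph{first} raise the self-intersection to $d^2+k$ (elementary modification / contraction of a negative configuration) and \emph{then} invoke the theorem of Section~\ref{Examples-in-positive}. But that theorem's hypothesis (2) requires the neighborhood of $C$ to be biholomorphic to a tubular neighborhood of $C_0$ \emph{in} $\Pj^2$, which forces $C\cdot C=d^2$; after your modification the hypothesis simply fails, so the theorem cannot be cited. The paper does the operations in the opposite order: glue the untouched $\Pj^2$-neighborhood to a neighborhood of a $(-1)$-rational curve, get non-algebraizability from Ivashkovich's theorem, and only then contract (repeatedly) the $(-1)$-curves, noting that a blow-down of an algebraizable surface would have algebraizable blow-up. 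Your ordering is fixable, but only by reverting to the paper's. Relatedly, your description of the contradiction is off: it is not that the extension ``propagates the rational structure of $\Pj^2$ onto $Y$ and contradicts the choice of $Y$'' --- in the paper $Y$ can be perfectly algebraic (the blow-up $\hat\Pj^2$!). The contradiction is intersection-theoretic and independent of $Y$: the meromorphic extension $I_0:\Pj^2\dashrightarrow\hat S$ of the embedding makes $I_0^{-1}(D)$ an algebraic curve, which (since $\deg C_0\ge2$) must meet $C_0$ at a second point, contradicting injectivity of $I_0$ on $V_0$.

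The second gap is the mechanism you use to control $\C(S)$: you attribute it to the choice of the auxiliary surface $Y$, whereas the correct (and the paper's) control knob is the transcendence of the \emph{gluing map} $\Psi$. The key tool you are missing is Rossi's theorem: $\C(V_0)\cong\C(\Pj^2)$, so $\C(S)$ is exactly the set of rational functions $g$ on $\Pj^2$ such that $g\circ\Psi^{-1}$ extends meromorphically across $W_0$. With a birational $\Psi$ every $g$ extends (transcendence degree $2$); with $\Psi(x,y)=(xe^y,y)$ an explicit argument (the extension equation would force $e^{-1/u}$ to satisfy an implicit analytic equation) shows only functions of $y$ extend (degree $1$); with $\Psi^{-1}(r,t)=(re^r,te^t)$ only constants extend (degree $0$). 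In all three cases $Y$ is the \emph{same} surface $\hat\Pj^2$. Your transcendence-degree-$1$ recipe --- take $Y$ ruled over a base of genus $\ge1$ ``so that exactly the functions pulled back from the base survive'' --- fails outright: a $\Pj^1$-bundle over a compact curve is projective, its function field has transcendence degree $2$, and for a section of positive self-intersection the neighborhood germ carries that whole field; nothing restricts extendable functions to base pullbacks, and in any case ``surviving'' depends on $\Psi$, which you leave unspecified. (Your degree-$0$ route via a neighborhood with $\C(W_0)=\C$ as in \cite{FL2} would work and is a legitimate alternative to the paper's, and your degree-$2$ route is essentially the paper's, provided the gluing is birational.) As it stands, one third of the ``moreover'' clause has no valid proof.
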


In Section \ref{Lvov}, we generalize a construction of \cite{Lvov}. It arises naturally a phenomenon of non-separability of points by meromorphic functions, which seems to be a source of new examples not covered by the arguments of Section \ref{Examples-in-positive}. Finally, in Sections \ref{Neeman} and \ref{Examples-zero}, we construct other examples in the same spirit by starting with curves of zero self-intersection numbers contained in ${\mathbb P}^1$-bundles, and we finish  by comparing in Section \ref{comparing} this construction with the one presented in Section \ref{Examples-in-positive}

\section{Examples in the positive case}\label{Examples-in-positive}




Let $S$ be a smooth complex surface  with the following features:
\begin{enumerate}
\item $S$ contains two compact, connected, holomorphic curves $C$ and $D$ which cross each other transversely at only one point $P$, such that $D$ is smooth,
\item $C$ is biholomorphic to an algebraic curve $C_0 \subset \mathbb{P}^2$ and has a neighborhood $V\subset S$ biholomorphic to a tubular neighborhood $V_0$ of $C_0$ in $\mathbb{P}^2$; moreover, the biholomorphism from $V$ to $V_0$ takes $C$ to $C_0$.
\end{enumerate}

\begin{thm}\label{teorema-principal}
Let $S$, be a surface satisfying (1) and (2) as above.
Assume $\deg (C_0) \geq 2$. Then, the surface $S$ is not algebraizable.
\end{thm}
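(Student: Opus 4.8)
The plan is to argue by contradiction, using the extension theorem of Ivashkovich mentioned in the introduction as the decisive tool. Suppose $S$ were algebraizable, so that there is a holomorphic embedding $S \hookrightarrow \hat S$ into a smooth projective surface $\hat S$. The strategy is to use the positivity of $C$ (coming from $\deg C_0 \ge 2$, which forces $C \cdot C = (\deg C_0)^2 \ge 4 > 0$) to control the behavior of the curve $D$ inside $\hat S$, and then derive a contradiction from the global algebraic geometry of $\hat S$.

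\medskip

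First I would exploit hypothesis (2): since a neighborhood $V$ of $C$ in $S$ is biholomorphic to a tubular neighborhood $V_0$ of the plane curve $C_0 \subset \mathbb P^2$, the normal bundle and the formal/analytic type of the embedding of $C$ agree with those of $C_0$ in $\mathbb P^2$. In particular $C$ has positive self-intersection and its neighborhood carries the rich family of nearby curves coming from the linear system of lines (or of curves of the same degree) in $\mathbb P^2$. The key geometric point is that $D$ meets $C$ transversely at the single point $P$, so near $P$ the curve $D$ looks like a disc crossing $C_0$ transversely inside $V_0 \subset \mathbb P^2$. I would then set up the situation so that Ivashkovich's Levi-type extension theorem applies: one considers the complement of $C$ (or a suitable concave/pseudoconcave neighborhood determined by the positive curve $C$) and seeks to extend a meromorphic map or the embedding across $C$. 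The positivity of $C\cdot C$ is exactly what produces the pseudoconcavity needed to invoke the extension result.

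\medskip

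The heart of the argument is then the following: because $S$ sits in the projective surface $\hat S$, the curve $D$ extends to (or generates) a genuine algebraic curve $\hat D \subset \hat S$, and I would use Ivashkovich's theorem to extend the identification of $V$ with $V_0$ to a meromorphic map defined on a full neighborhood of $C$ in $\hat S$, or dually to extend a family of algebraic curves from $\hat S$ across the pseudoconcave collar around $C$. Intersecting these extended algebraic curves with $D$ and with $C$ and comparing intersection numbers should produce an inconsistency: the local picture (transverse intersection at the single point $P$, dictated by the plane model) cannot be reconciled with the global intersection theory forced by the projective ambient surface $\hat S$ when $\deg C_0 \ge 2$. Concretely, the abundance of curves in the plane model passing through $C$ with prescribed tangency, once extended by Ivashkovich into $\hat S$, would have to meet $D$ with multiplicities violating the single transverse intersection, or would force $D$ to be globally constrained in a way incompatible with $C\cdot C = (\deg C_0)^2$.

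\medskip

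The main obstacle I anticipate is twofold. First, verifying precisely the hypotheses of Ivashkovich's extension theorem: one must identify the correct pseudoconcave domain attached to the positive curve $C$ and check that the meromorphic objects one wishes to extend (either the embedding into $\mathbb P^2$ or sections/curves of $\hat S$) satisfy the boundedness and Hartogs-type conditions the theorem requires; this is where $\deg C_0 \ge 2$ must be used rather than merely $C \cdot C > 0$, since degree controls the genus and the geometry of the linear system. Second, once extension is achieved, carefully tracking intersection numbers to locate the contradiction is delicate: I would need to ensure that the extended family truly lives algebraically in $\hat S$ and that the single transverse point $P$ is genuinely incompatible with what $\hat S$ allows. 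I expect the cleanest route is to show that algebraizability would force the field of meromorphic functions on $S$ to contain enough functions to separate the geometry, contradicting a rigidity statement that the positive neighborhood $V_0 \cong V$ of a plane curve of degree $\ge 2$ imposes via the Ivashkovich extension.
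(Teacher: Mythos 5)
Your skeleton (contradiction, Ivashkovich extension, then an intersection-theoretic contradiction) matches the paper's strategy in spirit, but at both critical points your proposal is either pointed in the wrong direction or left as a hope rather than an argument. First, the extension step. The paper does not extend anything on the $\hat S$ side, and it does not invoke pseudoconcavity of a collar around the positive curve $C$: it extends the embedding $i_0\colon V_0\to V\subset \hat S$ in the \emph{other} direction, from $V_0$ to all of $\mathbb{P}^2$. The hypotheses of Ivashkovich's theorem are verified on the source: $U_0=\mathbb{P}^2\setminus C_0$ is Stein, the set $K_0=U_0\setminus V_0$ is a hole whose complement $V_0$ is connected, and the target $\hat S$ is projective; hence $i_0$ extends to a meromorphic map $I_0\colon \mathbb{P}^2\dashrightarrow \hat S$. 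Your plan to extend the identification ``to a full neighborhood of $C$ in $\hat S$,'' or dually to extend families of algebraic curves of $\hat S$ across a pseudoconcave collar, cannot be carried out as stated: you know nothing about $\hat S\setminus C$ (it need not be Stein and you have no control over its holes), whereas the whole point of hypothesis (2) is that the source of the map can be taken to be the explicitly understood $\mathbb{P}^2$.

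Second, the contradiction step, which you leave as ``comparing intersection numbers should produce an inconsistency.'' The actual mechanism is a single application of Bezout to the pullback of $D$. Let $b$ be the branch of $i_0^{-1}(D\cap V)$ through $p_0=i_0^{-1}(P)$. Since $\hat S$ is projective, $I_0^{-1}(D)$ is an algebraic curve of $\mathbb{P}^2$, so $b$ is contained in some component $B_0$ of it. Bezout gives $B_0\cdot C_0=\deg B_0\cdot \deg C_0\ge 2$, and the intersection at $p_0$ is transverse of multiplicity one, so $B_0$ meets $C_0$ at a second point $p_0'\ne p_0$; note $p_0'\in C_0\subset V_0$. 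Then $I_0(p_0')\in D\cap C=\{P\}=I_0(p_0)$, contradicting the injectivity of $I_0|_{V_0}=i_0$. This also corrects your diagnosis of where $\deg C_0\ge 2$ enters: it has nothing to do with genus or the richness of linear systems; it is exactly what forces the second intersection point (for $\deg C_0=1$ the component $B_0$ could be a line meeting $C_0$ only at $p_0$, and the argument collapses). Without the pullback-plus-Bezout idea and with the extension set up on the wrong side, the proposal does not close.
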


First observe that such a surface $S$ can be easily constructed by standard gluing process.
We first construct a surface $S$ using a standard glueing process. 
Let $V_0$ be a tubular neighborhood of the (connected) curve $C_0$ in $\Pj^2$. 
We also take a smooth compact curve $D_0$ contained in some surface, 
and consider again a tubular neighborhood $W_0$. We select points $p_0\in C_0$ and $q_0\in D_0$, and pick a local biholomorphism $\Psi:(V_0,p_0)\to (W_0,q_0)$ sending $C_0$ transversally to $D_0$. 
By glueing $V_0$ and $W_0$ by means of $\Psi$, we obtain a smooth complex surface $S$, 
which contains copies $(V,C)$ and $(W,D)$ of $(V_0,C_0)$ and $(W_0,D_0)$ respectively. 
The curve $C$ crosses $D$ transversally at a point $P$, which corresponds to the identification 
of $p_0$ with $q_0$ via $\Psi$.

\begin{figure}[h]
\centering
\includegraphics[scale=0.4]{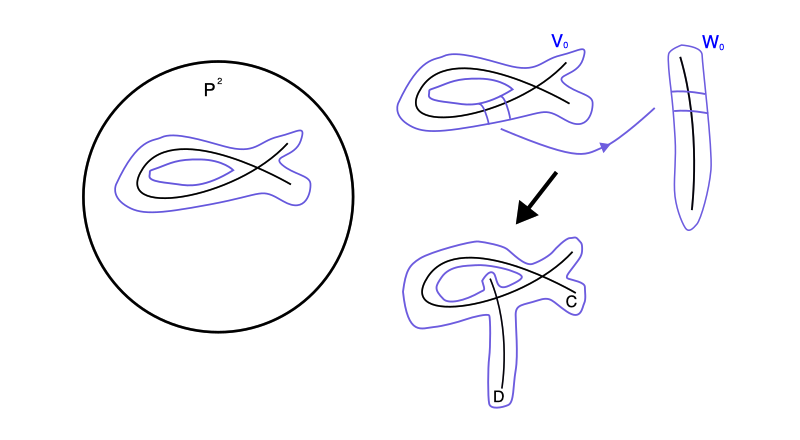}
\caption{Glueing Neighborhoods}
\label{figura:glueing}
\end{figure}

\begin{proof}
We proceed by contradiction. Assuming that  S is algebraizable, we may assume that $S$ is contained in some projective surface $\hat S$; furthermore, we have an holomorphic embedding   $i_0: V_0 \rightarrow S$ which takes $V_0$ to $V$ and $C_0$ to $C$.  Since $U_0:= \mathbb{P}^2\setminus C_0$ is a Stein surface, $K_0:=U_0 \setminus V_0$ has connected complement $V_0$, and $\hat S$ is projective, we may apply Ivashkovich's theorem \cite{Iv} and deduce that
the map  $i_0$ has  a meromorphic extension $I_0: \mathbb{P}^2 \dashrightarrow \hat{S}$ 
(mind that $i_0$ is already defined along $C_0$). 
Let us take in $V_0$ the component (branch) $b$ of $i_0^{-1}(D \cap V)$ which passes through $p_0=i_0^{-1}(P)$. Since $I_0^{-1}(D)$ is an algebraic curve of $\mathbb{P}^2$, it contains a component $B_0 \supset b$.

If $\deg C_0\geq 2$, then $B_0$ necessarily crosses $C_0$ at another point $p_0^{\prime}\neq p_0$, 
but $I_0|_{V_0}$ is a bijection between $V_0$ and $V$ which satisfies $I_0(p_0^{\prime}) \in B \cap C=\{P\}=I_0(0)$, a contradiction.
\end{proof}

As an application we have 

\begin{cor}\label{Cor:GenLvovski}
Let $C_0 \subset \Pj^2$ be an algebraic curve of degree $d \geq 2$ and let $k>d^2$ be a natural number. Then, there exist a non algebraizable surface $S$ and a holomorphic embedding $C$ of $C_0$ into $S$ with self-intersection $k$. 
\end{cor}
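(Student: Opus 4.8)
The plan is to deduce the corollary from Theorem~\ref{teorema-principal} by first producing a non-algebraizable surface in which the copy of $C_0$ has self-intersection $d^2$, and then raising that self-intersection to $k$ through a sequence of elementary transformations which preserve non-algebraizability. To begin, I would run the gluing construction preceding Theorem~\ref{teorema-principal}, taking $C_0\subset\Pj^2$ of degree $d\ge2$ and choosing for $D_0$ a smooth rational curve of self-intersection $0$ — for instance a fibre of a ruled surface — meeting $C_0$ transversally at a single point. By the theorem the resulting surface $S$ is non-algebraizable; it contains the copy $C\cong C_0$ with $C\cdot C=d^2$, and it carries an auxiliary smooth rational curve $D$ with $D\cdot D=0$ crossing $C$ transversally at one point $P$.

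Next I would increase $C\cdot C$ one unit at a time by the following elementary transformation. Pick a point $q\in D\setminus\{P\}$, blow it up, and then contract the strict transform $\tilde D$. Since $q\notin C$, the blow-up leaves $C$ untouched, while $\tilde D$ becomes a $(-1)$-curve still meeting $C$ transversally at $P$; contracting it therefore yields $C\cdot C+1$, by the formula $C'\cdot C'=C\cdot C+(C\cdot\tilde D)^2$. A short local computation shows that the image of the exceptional curve of the blow-up is again a smooth rational curve of self-intersection $0$ meeting the new $C$ transversally at one point, so the configuration is reproduced and the operation can be iterated. Performing it $k-d^2>0$ times produces a surface in which the copy of $C_0$ has self-intersection exactly $k$. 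Note that each step modifies $S$ only away from $C$, or by contracting a curve meeting $C$ transversally at a single smooth point, so the embedded curve stays biholomorphic to $C_0$ throughout.

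The step I expect to be the crux is verifying that non-algebraizability survives these modifications. For this I would record the elementary observation that if $X$ is non-algebraizable then so is any surface $X'$ obtained from $X$ by blowing up a point or by contracting a $(-1)$-curve. Indeed, were $X'$ to embed in a smooth projective surface $\hat X'$, the self-intersection of the exceptional (resp.\ contracted) curve would be the same whether computed in $X'$ or in $\hat X'$, since the curve lies in the open subset $X'\subset\hat X'$; hence Castelnuovo's criterion lets one blow it down (resp.\ blow up its image) inside $\hat X'$, producing a smooth projective surface into which $X$ embeds and contradicting the non-algebraizability of $X$. Applying this observation at every blow-up and every blow-down along the chain of elementary transformations keeps the final surface non-algebraizable, which, together with the self-intersection count and the persistence of the biholomorphism type of $C$, completes the argument.
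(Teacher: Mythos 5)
Your proposal is correct, but it runs the induction by a genuinely different mechanism than the paper. The paper takes $D_0$ itself to be a $(-1)$-rational curve, so that a single contraction immediately raises the self-intersection of $C$ to $d^2+1$; to go higher it repeats the \emph{gluing} construction, attaching further $(-1)$-rational curves at points of $C$ and contracting them. As a result the only preservation statement the paper needs is the easy direction of your lemma: non-algebraizability survives a blow-down, proved exactly as you prove it, by blowing up the image point inside the putative projective compactification. You instead glue only once, with a $0$-curve, and then raise $C\cdot C$ by elementary transformations carried out entirely inside the surface (blow up a point of $D\setminus\{P\}$, contract the strict transform $\tilde D$); your verification that the configuration ``$C$ plus a transverse $0$-curve'' reproduces itself is correct, since $\tilde D^2=-1$, the image $E'$ of the exceptional curve has $E'\cdot E'=(E\cdot E)+(E\cdot\tilde D)^2=0$, and $E'\cdot C''=1$ with transverse intersection because $E$ and $C$ meet $\tilde D$ at distinct points. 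The price of this economy is that you also need the harder direction of the preservation lemma---non-algebraizability survives a blow-\emph{up}---which forces you to invoke Castelnuovo's contraction criterion in the ambient projective surface (legitimate, since the self-intersection $-1$ is computed in a neighborhood of the curve and so agrees in $X'$ and in $\hat X'$) together with uniqueness of the contraction to identify the image of $X'$ with $X$; the paper never needs this tool. In short: your route does the non-trivial gluing once and afterwards stays within intrinsic birational geometry, making it transparent that the embedded curve remains a copy of $C_0$; the paper's route needs only the trivial half of the preservation lemma but must repeat the gluing $k-d^2$ times. Both arguments are sound and yield the corollary.
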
 

\begin{proof} 
We repeat the construction of Theorem \ref{teorema-principal} with $D_0$ a $(-1)$-rational curve, i.e. a smooth rational curve
having self-intersection $(-1)$ in $W_0$. This provides a non-al\-ge\-brai\-za\-ble surface $S$ with a curve $C\cup D$ where $D$ is a $(-1)$-rational curve, $C$ is a copy of $C_0$ with self-intersection $C.C=d^2$, and $C.D=1$ (they intersect transversally at a single point $P$). 
Then, after blowing-down the curve $D$ to a point $P'$, we get a new surface $S'$ with a copy $C'$ of $C$ (or $C_0$) having now
self-intersection $C'.C'=d^2+1$. Obviously $S'$ cannot be contained in an algebraic surface $\hat S'$, otherwise
the blow-up $S\to S'$ would be contained in the algebraic surface obtained by blowing-up $P'$, contradiction. The pair $(S',C')$ is the pair $(S,C)$ of the statement for $k=d^2+1$. For greater $k$, we just repeat the operation of gluing $(-1)$-rational curves and contracting them, as many times as necessary. 
\end{proof}

\section{The field of meromorphic functions}\label{Field-meromorphic}
Let us consider the pair $(S,C)$ constructed in Corollary \ref{Cor:GenLvovski},
and let us denote by $\C(S)$ the field of global meromorphic functions on $S$. 
It obviously contains the field $\C$ as the subfield of constant functions. 
The transcendance degree of $\C(S)$ over $\C$ can be either $0$, $1$ or $2$ 
(it is bounded by the dimension of $S$). We show in this section that we can choose 
the gluing maps $\Psi$ in the construction of Theorem \ref{teorema-principal} and Corollary \ref{Cor:GenLvovski}
so as to choose the transcendance degree among $0$, $1$ or $2$.

First of all, in the proof of Corollary \ref{Cor:GenLvovski}, we note that $\C(S)=\C(S')$ since a meromorphic 
function is transformed into a meromorphic function under blow-up and blow-down.
On the other hand, a meromorphic function on the surface $S$ of Theorem \ref{teorema-principal}
is equivalent to the data of meromorphic functions $g:V_0\dashrightarrow\Pj^1$ and $h:W_0\dashrightarrow\Pj^1$
that coincide through the gluing map $\Psi:(V_0,p_0)\to (W_0,q_0)$, namely: $g=h\circ\Psi$. 

Now, we note that $\C(V)=\C(V_0)$ identifies with the field of rational functions $\C(\Pj^2)$
as any meromorphic function on the neighborhood $V_0$ of $C_0$ extends meromorphically on $\Pj^2$ (see \cite[Theorem 3.1]{Ros}).
The problem is now to understand which rational function $g$ on $V_0$ extends through $\Psi$
as a meromorphic function on $W_0$.

\subsection{Transcendance degree 2}
We can first choose $D_0$ as to be the exceptional divisor of the one-point-blow-up $\hat{\Pj}^2\to \Pj^2$
of the projective plane. Then, we can choose $\Psi$ to be birational map $\Psi: \Pj^2\dashrightarrow\hat{\Pj}^2$
that localizes as a local biholomorphism $(V_0,p_0)\to (W_0,q_0)$. Then, for any rational function $g$ on $\Pj^2$,
then $g\circ\Psi^{-1}$ is rational on $\hat{\Pj}^2$ and restricts as a meromorphic function on $W_0$.
We have just proved that $\C(\Pj^2)=\C(V)=\C(\underbrace{V\cup W}_{S})$. 
We can repeat this patching at several points of $C$ and get examples with transcendance degree
is $2$ with arbirtrary self-intersection number $k>d^2$.

\begin{remark}
When $d=2$ this is the construction in \cite[Section 4]{Lvov}.
\end{remark}

\subsection{Transcendance degree 1}
In order to give examples with lower transcendance degree, we can modify the 
previous construction as follows. First of all, consider a systems of affine coordinates $(x,y)$
in $\Pj^2$ such that $p_0$ is given by $(x,y)=(0,0)$ and $C_0$ is tangent to $y=0$ at $p_0$.
Then, consider usual affine charts on $\hat{\Pj}^2$ with coordinates $(r,t), (u,v)$ on $W_0$, 
related by $u=t^{-1}, v=rt$.
Again, we assume $q_0$ corresponds to $(r,t)=(0,0)$, and $D_0$ to $r=0$.
Then, we can define $\Psi$ in coordinates: $\Psi(x,y)=(r,t)$. The transversality condition 
is that $\Psi(x,0)$ must be transversal to $r=0$.

For $\Psi(x,y)=(x,y)$, or more generally $\Psi$ birational, we get transcendance degree
is $2$ as explained before. Now, let us consider $\Psi(x,y)=(xe^y,y)$. Then, a meromorphic function 
on $S$ is given by a rational function $g$ on $V_0$ such that the meromorporphic function 
$g\circ\Psi^{-1}=g(re^{-t},t)$ extends as a meromorphic function $h$ on $W_0$; in particular, 
in the other chart, $h(u,v):=g\left(uve^{-1/u},\frac{1}{u}\right)$ must be meromorphic at $(u,v)=(0,0)$,
which is possible if, and only if, $g$ does not depend on the first variable. Indeed, otherwise, the equality
$g\left(uve^{-1/u},\frac{1}{u}\right)=h(u,v)$ would say that $e^{-1/u}$ is solution of an implicit analytic
equation, impossible. Therefore, $\C(S)=\C(y)$ in that case, and has transcendance degree 1.

In order to produce examples with arbitrary self-intersection number $k>d^2$, one can patch 
(and contract) other $(-1)$-rational curves with birational gluing, so that we get no obstruction 
to further extend any $g(y)$ on neighborhood of other $(-1)$-rational curves.

\subsection{Transcendance degree 0}
Finally consider $(x,y)={\Psi}^{-1}(r,t)= (re^r,te^t)$. Then $h(u,v) = g(uv e^{uv},u^{-1}e^{u^{-1}})$ is meromorphic only when $g$ is constant. It follows that ${\mathbb C}(S)$ has degree of transcendence 0. This ends the proof of Theorem \ref{Ivas-trdeg2}.


\begin{remark}
The construction shows that, in Theorem \ref{Ivas-trdeg2}, smaller neighborhoods of $C$ are still non alge\-brai\-zable:
$S$ contains no algebraizable neighborhood of $C$.
We may therefore use the terminology {\it non algebraizable germ of surface containing $C$}.
\end{remark}


\section{The Lvovski Construction}\label{Lvov}
We present now a generalisation of the construction in \cite{Lvov}. 
Let $C_0\subset \mathbb P^2$ be a smooth plane curve of degree  $d=deg(C_0)\ge 2$ 
and take $D_0\subset \Pj^2$ a line transversal to $C_0$.  
The curves $C_0$ and $D_0$ cross each other in points $P_1,\cdots,P_d$.
We choose small tubular neighborhoods $V_0$ and $W_0$ of $C_0$ and $D_0$ respectively,
in such a way that their intersection $V_0\cap W_0$ splits into $d$ connected component $U_j\ni P_j$.
Denote $S_0:=V_0\cup W_0$.

We now consider copies $(V,C)$ and $(W,D)$ of $(V_0,C_0)$ and $(W_0,D_0)$, and 
denote by $p_j$ and $q_j$ the respective lifts of $P_j$. Denote also by $V_j\subset V$ and $W_j\subset W$ 
the respective copies of $U_j$, and $\Psi_j:V_j\to W_j$ defined by their identification to $U_j$.

We glue $V$ with $W$ through the map $\Psi_1:V_1\to W_1$, and denote by $S$ the surface
obtained by this way. 
It is equipped with an immersion $\pi:S\to \Pj^2$ coming from the two embeddings
$V\hookrightarrow\Pj^2$ and $W\hookrightarrow\Pj^2$: we have $\pi\circ\Psi_j=\pi$.
The two curves $C,D\subset S$ now intersect into a single point $P=\pi^{-1}(P_1)$
while $\pi^{-1}(P_j)=\{p_j,q_j\}$ for $j\ge2$.

\begin{figure}[h]
\centering
\includegraphics[scale=0.4]{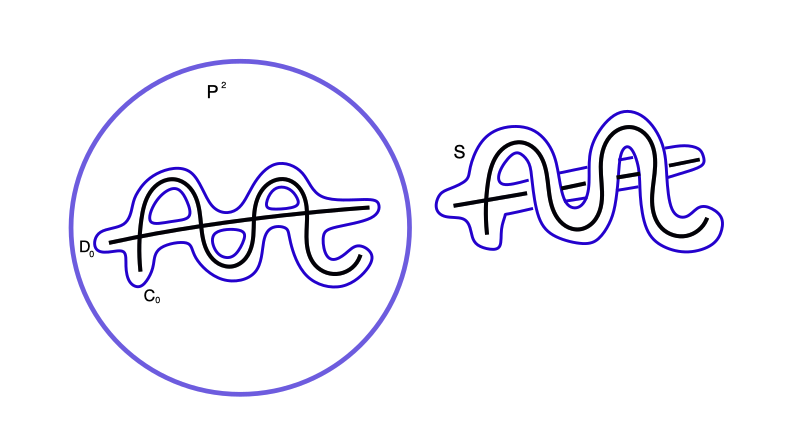}
\caption{Separation of points}
\label{figura:viaduto}
\end{figure}

\begin{prop}\label{Prop:Separation} The surface $S$ is not algebraizable.
\end{prop}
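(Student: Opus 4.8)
The plan is to mimic the contradiction argument of Theorem \ref{teorema-principal}, but accounting for the fact that $C$ and $D$ now meet at a single point $P$ while sharing $d-1$ further \emph{apparent} crossings that have been separated by the gluing. Suppose for contradiction that $S$ is algebraizable, so that $S$ embeds in a smooth projective surface $\hat S$. As before, the embedding $V_0\hookrightarrow S\subset\hat S$ restricts to a holomorphic map $i_0$ on the tubular neighborhood $V_0$ of $C_0$ in $\Pj^2$. Since $\Pj^2\setminus C_0$ is Stein and $\hat S$ is projective, Ivashkovich's extension theorem applies and produces a meromorphic extension $I_0:\Pj^2\dashrightarrow\hat S$.

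The key step is to examine the image $I_0^{-1}(D)$, an algebraic curve in $\Pj^2$. Locally near $p_0=i_0^{-1}(P)$ it contains the branch $b$ of $i_0^{-1}(D\cap V)$, hence it contains some algebraic component $B_0\supset b$. Since $\deg C_0=d\ge2$, B\'ezout forces $B_0$ to meet $C_0$ in at least $d\cdot\deg(B_0)\ge 2$ points counted with multiplicity; in particular $B_0$ meets $C_0$ at some point $p_0'\neq p_0$. The contradiction comes from tracking where $I_0$ sends $p_0'$: since $I_0|_{V_0}$ agrees with the embedding $i_0$, which is injective on $C_0$, the point $I_0(p_0')$ lies on both $C$ and the image of $B_0$, which is contained in $D$. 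But in the present construction $C\cap D=\{P\}$ is a single point, so $I_0(p_0')\in C\cap D=\{P\}=I_0(p_0)$, contradicting injectivity of $I_0$ along $C$.

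The subtlety here, compared with Corollary \ref{Cor:GenLvovski}, is precisely the non-separation phenomenon advertised in the section introduction: the naive preimages $i_0^{-1}(D)$ near the other crossings $P_j$ ($j\ge2$) no longer correspond to genuine intersections of $C$ and $D$ in $S$, because the gluing $\Psi_1$ identifies $V$ and $W$ only along $V_1$. I expect the main obstacle to be verifying that the algebraic continuation $B_0$ of the local branch $b$ must intersect $C_0$ at a point \emph{other} than $p_0$ whose image under $I_0$ genuinely lies in $D$ (rather than only in the immersed image $\pi^{-1}(D)$), so that the single-point intersection $C\cap D=\{P\}$ can be invoked. Concretely, one must check that $I_0$ and the original immersion $\pi$ are compatible enough that $I_0(p_0')$ is forced to lie on the curve $D\subset S\subset\hat S$, and not merely on a separated lift; this is where the transversality of $D_0$ to $C_0$ and the explicit structure of the gluing along the single component $U_1$ are used.
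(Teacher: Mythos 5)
Your proof is correct, but it takes a genuinely different route from the paper's. What you do is rerun the Ivashkovich argument of Theorem \ref{teorema-principal}; in fact, the Lvovski surface $S$ satisfies hypotheses (1) and (2) of that theorem verbatim --- $C$ and $D$ are compact connected curves crossing transversally only at $P$, $D$ is smooth, and $V$ is by construction a copy of a tubular neighborhood $V_0$ of $C_0$ in $\Pj^2$ with $\deg C_0\ge 2$ --- so Proposition \ref{Prop:Separation} could even be obtained by simply invoking Theorem \ref{teorema-principal} rather than redoing its proof. The paper argues differently: using Rossi's extension theorem \cite{Ros} it shows that $f_0\mapsto f_0\circ\pi$ is an isomorphism $\C(\Pj^2)\stackrel{\sim}{\to}\C(S)$, hence every meromorphic function on $S$ takes the same value at $p_j$ and $q_j$ for $j\ge 2$, while on a projective surface rational functions separate any two distinct points. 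That route avoids Ivashkovich's theorem entirely, it isolates the non-separability phenomenon which the introduction advertises as a source of examples beyond the method of Section \ref{Examples-in-positive}, and the isomorphism $\C(S)\simeq\C(\Pj^2)$ is exactly what gets reused right after the proposition (for $S'$, $S''$ and the transcendence degree $2$ case of Theorem \ref{Ivas-trdeg2}). Your route buys economy of ideas (one argument serving both Sections \ref{Examples-in-positive} and \ref{Lvov}); the paper's buys the function-field computation and a second, independent non-algebraizability criterion.

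Two points in your write-up deserve attention. First, the obstacle you flag at the end is not actually there: $I_0^{-1}(D)$ is defined through the graph of $I_0$ in $\Pj^2\times\hat S$, with $D$ the honest curve sitting in $\hat S$; since $I_0$ is holomorphic at $p_0'\in V_0$ (where it agrees with $i_0$), the condition $p_0'\in B_0\subset I_0^{-1}(D)$ gives $I_0(p_0')\in D$ by continuity, and no ``separated lift'' can intervene --- separation is a feature of the immersion $\pi:S\to\Pj^2$, not of the preimage under $I_0$ of a closed curve of $\hat S$. Second, your B\'ezout step is too quick as stated: knowing $B_0\cdot C_0\ge 2$ counted with multiplicity does not by itself produce a point $p_0'\neq p_0$, since a priori the whole intersection could concentrate at $p_0$. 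Here the gap closes immediately: the branch $b$ is an open arc of the line $D_0$, so the irreducible component $B_0$ containing it must be $D_0$ itself (two irreducible algebraic curves sharing infinitely many points coincide), and $D_0$ meets $C_0$ transversally in exactly $d\ge2$ distinct points; take $p_0'=P_2$.
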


\begin{proof}
We first claim that the map $\tau:\C(\Pj^2)\stackrel{\sim}{\to}\C(S)$ given by
$f_0\mapsto f_0 \circ\pi$ defines an isomorphism. 
Indeed, given a meromorphic function $f\in\C(S)$, its restriction to $V$ is given by $f\vert_V=f_0\circ\pi$ for a meromorphic function $f_0\in \C(V_0)$.
But this latter one automatically extends as a rational function $\hat f_0$ on $\Pj^2$ (see \cite[Theorem 3.1]{Ros}). 
Then, the meromorphic function $\hat f_0\circ\pi$ on $S$ coincides with $f$ on $V$,
and therefore everywhere.

As a consequence, any meromorphic function $f\in\C(S)$ 
which is well-defined at $p_j$, will be also well-defined at $q_j$ and will take the same value
$f(p_j)=f(q_j):=f_0(P_j)$. On the other hand, rational functions separe points 
on projective surfaces. Indeed, assume by contradiction that $S$ is contained in a projective surface $\hat S$. 
Then, for any $j\ge2$, one might be able to find $\hat f\in\C(\hat S)$ such that 
$\hat f(p_j)\not=\hat f(q_j)$. By restricting $\hat f$ to $S$, we get a contradiction.
Therefore, $S$ is not algebraizable.
\end{proof}

Now let us consider two points $q_{d+1},q_{d+2}$ distinct from $q_1,\ldots,q_d$ on $D$,
and consider the blow-up $\hat S\to S$ of these points. Then, restricting to a tubular neighborhood
$S'=V\cup W'$ of the strict transform $C\cup D'$, we get an immersion $\pi':S'\to\Pj^2$;
the same argument as in the proof of Proposition \ref{Prop:Separation} shows that 
$\C(\Pj^2)\simeq\C(S')$ and $S'$ is not agebraizable for the same reason.

Finally, we observe that $D'$ is now a $(-1)$-rational curve that can be contracted: the blow-down
$(S',C\cup D')\to (S'',C)$ provides a non algebraizable surface with $\C(S'')\simeq\C(\Pj^2)$
that contains the curve $C$ with self-intersection $d^2+1$.
This provides an alternate contruction and proof for Theorem \ref{Ivas-trdeg2} in the case
of transcendance degree $2$. 

\begin {remark}
The method of taking such \'etale open set $\pi:S=V\sqcup_\Psi W\to S_0=V_0\cup W_0$
separating local branches of $C_0\cup D_0$ at $P_2,\ldots,P_d$ has already been considered
by two of the authors in \cite{FS}.
\end {remark}

\begin{remark}
In \cite{Lvov} the case $d=2$ is treated with a different argument to prove non algebraizability.
\end{remark}

When $d=1$, we can adapt and take $D$ as a smooth conic of $\mathbb P^2$, then blow-up 5 times and contract. 
This will provide non algebraizable neighborhoods of a smooth rational curve $C$ with self-intersection $C\cdot C\ge2$,
and improve a bit Theorem \ref{Ivas-trdeg2}. 
We can find in \cite[Section 5.3]{FL} another contruction for a rational curve $C$ with self-intersection $C\cdot C=1$.

\section{Neeman's Construction}\label{Neeman}

We recall here a construction by Neeman (\cite {N}) which generalises Serre's example, we will also use this reference for the definition of Ueda Type.

Let us consider a $\mathbb{P}^1$-bundle over a compact smooth curve $C$ of genus $g \geq 1$, whose changes of coordinates between trivializations $(z_j,u_j)$ are of the form
$$
u_i=\dfrac {u_j}{1+a_{ij}(z_j)u_j}
$$
We remark that $C$ is a section $(u_i=0)$ and its normal bundle is trivial, in particular $C\cdot C=0$. The collection $\{a_{ij}(z_j)\}$ is an element of $H^1(C,\mathcal O)$ and the bundle is trivial if this cocycle is zero in $H^1(C,\mathcal O)$.

When the functions $a_{ij}(z_j)=a_{ij}$ are constants, $\{a_{ij}\}\in H^1(C, \mathbb C)$, the foliation given by $du_i=0$ is well defined and $C$ is a leaf without singularities. 

We intend now to give examples where the complement of $C$ in the $\mathbb{P}^1$-bundle is a Stein surface. First of all, we need to find non trivial $\mathbb{P}^1$-bundles over $C$. Let us take the following short exact sequence of sheaves over $C$
$$
0 \rightarrow \mathbb C \rightarrow \mathcal{O} \rightarrow \Omega^1 \rightarrow 0
$$
which gives rise to the exact sequence in cohomology
$$
\cdots \rightarrow H^0(C,\Omega^1) \xrightarrow {\alpha} H^1(C,\mathbb C) \xrightarrow {\beta} H^1(C,\mathcal O) \rightarrow \cdots
$$
It is enough to show that $\beta$ is not identically zero. Now $\ker(\beta) = \im(\alpha)$; since $h^0(C, \Omega^1)=g$ and $h^1(C,\mathbb C)=2g$ (it is isomorphic to the first deRham cohomology group of $C$), it follows that $H^1(C,\mathbb C)$ is different from $\ker(\beta)$. 
For a constant cocycle $\{a_{ij}\}\in H^1(C, \mathbb C)$ which is not in the kernel $\ker(\beta)$, the corresponding 
$\mathbb{P}^1$-bundle is non trivial.

From now on, we consider a non trivial $\mathbb{P}^1$-bundle given by a constant cocycle which has $C$ as a section. Since the normal bundle is trivial, the Ueda type of $C$ is finite. Proceeding as in \cite[Proposition 5.3 and Remark 7.9]{N}, we conclude that the complement of $C$ is a Stein surface; the main point is that there exists a strictly plurisubharmonic function in a neighborhood of $C$ (see \cite{U}), which implies that $M \setminus C$ is holomorphically convex.

\section{Examples in the zero case}\label{Examples-zero}

We use now the examples of the last section to construct new non algebraizable surfaces. We proceed as follows:\\

\textbf{(1)} We take, as in the previous section, a $\mathbb{P}^1$-bundle $M$ over the curve $C$ such that $C$ is a section, $C\cdot C=0$ and $M \setminus C$ is a Stein surface and  take $V_1$ a tubular neighborhood of $C$. Let $D$ be a compact irreducible curve in $M$ which is transversal
to $C$ at some point  $p_1$.  Since $M$ is a compact projective surface, we may take $D$ containing some point $r\in C$ different from $p_1$.

\textbf{(2)} Let $B $ be a holomorphic curve contained in some surface $W$ and $W_1$ be a tubular neighborhood; we select
a point $q\in B$.

\textbf{(3)} We glue $V_1$ to $W_1$ using a local biholomorphism that sends $p_1$ to $q$ and sends a neighborhood of $p_1$ in $D$ into a neighborhood of $q$ in $B$.\\

The resulting surface $S$ is not algebraizable by a reason analogous to the one in Theorem 2.1. Suppose by contradiction that $S$ is contained in some compact projective surface $\hat S$. The holomorphic embedding $i$ of $V_1$ into $S$ extends to a meromorphic map $I$ from $M\setminus C$ to $\hat S$. Now $I^{-1}(B)$ is a holomorphic curve which contains $D$, so it must contain also $r$, which is a contradiction.  

As a consequence we have

\begin{thm}\label{Any-genus}
Let $C$ be a smooth compact curve of genus $g\geq 1$. Then there exists a non algebraizable surface $S$ containing a copy of $C$
with $C\cdot C=k$ for any $k>0$.
\end{thm}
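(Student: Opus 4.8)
The plan is to combine Neeman's construction from Section \ref{Neeman} with the gluing-and-contraction technique already exploited in Corollary \ref{Cor:GenLvovski}, reducing everything to the non-algebraizability argument established in the lines preceding the statement. First I would invoke Section \ref{Neeman} to produce a non-trivial $\mathbb{P}^1$-bundle $M$ over the given curve $C$ of genus $g\ge 1$, realizing $C$ as a section with trivial normal bundle, so that $C\cdot C=0$ in $M$ and the complement $M\setminus C$ is Stein. Since $M$ is a compact projective surface, I would then fix $k$ distinct points $p_1,\ldots,p_k$ on $C$ and, for each, a compact irreducible curve $D_i\subset M$ meeting $C$ transversally at $p_i$; at least one of them, say $D_1$, can be taken to pass through a second point $r\in C$ with $r\neq p_1$, since any sufficiently ample curve through $p_1$ meets $C$ again.

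Next, following steps (1)--(3) preceding the statement, I would glue to a tubular neighborhood $V_1$ of $C$, at each $p_i$, a tubular neighborhood $W_i$ of a $(-1)$-rational curve $B_i$ (the exceptional divisor of a one-point blow-up of an auxiliary surface), identifying the local branch of $D_i$ at $p_i$ with $B_i$ near a chosen point $q_i$. Choosing the neighborhoods small enough, the curves $B_1,\ldots,B_k$ are pairwise disjoint, and each $B_i$ meets $C$ transversally at a single point while retaining self-intersection $-1$; moreover $C$ keeps $C\cdot C=0$, this being a local computation inside $V_1\subset M$ unaffected by the gluings. The resulting surface $S$ is non-algebraizable by exactly the argument given before the statement: if $S$ embedded in a projective surface $\hat S$, the embedding of $V_1$ would extend, via Ivashkovich's theorem \cite{Iv} and the Stein-ness of $M\setminus C$, to a meromorphic map $I:M\setminus C\dashrightarrow \hat S$; then $I^{-1}(B_1)$ would be an analytic curve containing the branch of $D_1$ at $p_1$, hence all of $D_1\setminus C$ by irreducibility of $D_1$, forcing points of $D_1$ arbitrarily close to $r$ into $B_1$ and contradicting that $I$ coincides with the original embedding near $r$, where $C$ does not meet $B_1$.

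Finally, I would contract the $k$ pairwise disjoint $(-1)$-curves $B_1,\ldots,B_k$. Each blow-down raises the self-intersection of (the strict transform of) $C$ by one, since $B_i$ meets $C$ transversally at a single point, so after all $k$ contractions we obtain a surface containing a copy of $C$ with $C\cdot C=0+k=k$. Non-algebraizability is preserved throughout, exactly as in Corollary \ref{Cor:GenLvovski}: if the contracted surface embedded in a projective surface, re-blowing-up the images of the contracted points would place $S$ itself inside a projective surface, contradicting the previous paragraph. This produces the desired non-algebraizable surface for every $k>0$.

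The genuine content beyond the construction preceding the statement is twofold, and this is where I expect the only real difficulty to lie: first, that Neeman's $\mathbb{P}^1$-bundle $M$ has Stein complement $M\setminus C$ for every genus $g\ge 1$ (which is precisely the plurisubharmonicity input from \cite{U} and \cite{N} recalled in Section \ref{Neeman}, relying on $M$ being a \emph{non-trivial} bundle of finite Ueda type), and second, the availability of a compact irreducible curve $D_1$ meeting $C$ in a second point $r$, which is exactly what the projectivity of $M$ guarantees. Once these two structural features are in place, the passage from $C\cdot C=0$ to $C\cdot C=k$ is routine blow-up/blow-down bookkeeping.
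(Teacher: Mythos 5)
Your proposal is correct and follows essentially the same route as the paper: the paper's proof also takes the Section \ref{Examples-zero} construction (with its Ivashkovich/Stein non-algebraizability argument via the curve $D$ through the second point $r$), glues $k$ rational $(-1)$-curves transversally to $C$ --- using $B$ at $p_1$ and ``replacing'' $k-1$ fibers of the $\mathbb{P}^1$-bundle by $(-1)$-curves, where you instead use auxiliary compact curves $D_i$ --- and then performs $k$ blow-downs to raise $C\cdot C$ from $0$ to $k$, with non-algebraizability preserved exactly as in Corollary \ref{Cor:GenLvovski}. The only cosmetic difference is your choice of gluing directions at the $k-1$ auxiliary points (irreducible curves $D_i$ rather than fibers), which is immaterial since only $D_1$ enters the extension argument.
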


\begin{proof}
We may use $B$ as a rational $(-1)$ curve, and replace $k-1$ fibers by $(-1)$ rational curves. We get a non algebraizable surface, and after $k$ blow-down's we arrive to a non algebraizable surface containing $C$ as a curve satisfting $C\cdot C=k$ (in fact, a non algebraizable germ of surface containing $C$).
\end{proof}

\section{Comparing non algebraizable neighborhoods}\label{comparing}

Let us compare two non algebraizable neighborhoods of curves built from the  processes we decribed before.

We start taking a neighborhood in $\mathbb P^2$ of a smooth plane curve $C$ of degree $d\ge 3$ (therefore its genus is $\ge1$). As in Section \ref{Examples-in-positive} we  glue a $(-1)$ rational curve at a point $Q \in C$ and blow it down in order to get a surface $S_1$ containing a copy $C_1$ of $C$ whose self-intersection number is $l=d^2+1$.\\

The next step comes from Section \ref{Examples-zero}: we take a non trivial $\mathbb P^1$-bundle over $C$ with a foliation around $C$ given by $\{du_i=0\}$ (containing $C$ as a leaf) and glue $l$ (-1) rational curves at points $P_1,\dots P_l$. We may take foliations around the $(-1)$ curves which become radial singularities after blow-down`s; moreover, these foliations glue with the foliation given by $\{du_i=0\}$ in the ${\mathbb P^1}$-bundle. After blowing down these curves we have another copy $C_2$ of $C$ with self-intersection number $l$ inside a surface $S_2$. We have also a foliation $\mathcal F_2$ on $S_2$ for which $C$ is an invariant curve with $l$ radial singularities.

\begin{prop} 
Surfaces $S_1$ and $S_2$ are not biholomorphically equivalent. 
\end{prop}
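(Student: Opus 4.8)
The plan is to distinguish $S_1$ and $S_2$ by exhibiting a biholomorphic invariant of the pair (surface, embedded curve) that differs between the two constructions. The most natural candidate is the existence of a holomorphic foliation having $C$ as an invariant curve with prescribed singularity type along $C$. By construction, $S_2$ carries a foliation $\mc F_2$ for which $C_2$ is invariant with $l$ radial singularities, inherited from the foliation $\{du_i=0\}$ on the $\Pj^1$-bundle together with the radial singularities produced by blowing down the glued $(-1)$-curves. The strategy is to show that $S_1$ admits no such foliation, so that the two neighborhoods cannot be biholomorphic.

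First I would examine what foliations can exist on a neighborhood of $C_1$ in $S_1$. Since $C_1$ is a copy of the smooth plane curve $C$ of degree $d\ge3$ with self-intersection $l=d^2+1$, and since the germ of $S_1$ along $C_1$ was obtained from the standard neighborhood of $C_0\subset\Pj^2$ by gluing and blowing down a single $(-1)$-curve, I would analyze the normal bundle $N_{C_1}$ and the Ueda-type/formal data of the embedding. The key numerical point is that for a smooth plane curve of degree $d$, the conormal data is governed by the positive self-intersection, and a foliation leaving $C_1$ invariant would impose constraints relating the degree of the normal bundle, the genus of $C$, and the number and type of singularities along $C_1$ via an index formula (Camacho--Sad or the Baum--Bott/GSV index). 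I would compute the total index $\sum_{p\in C_1}\mathrm{CS}(\mc F,C_1,p)=C_1\cdot C_1=l$ for any hypothetical foliation on $S_1$ and compare it with the index contribution forced on $S_2$ by its $l$ radial singularities.

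The decisive step is to argue that on $S_1$ either no invariant foliation with only radial singularities exists, or that such a foliation would force an algebraic structure incompatible with the non-algebraizability already established for $S_1$. Here I would use that a radial singularity has a very rigid local model; if $C_1$ carried a foliation with $l$ radial singularities, one could try to extend this foliation across the neighborhood $V$ coming from $\Pj^2$, where foliations leaving a degree-$d$ curve invariant are classified and their singular schemes on $C_0$ are determined by the restriction of a global foliation of $\Pj^2$ or its blow-up. The point is that the singular points of a degree-$r$ foliation of $\Pj^2$ along a smooth degree-$d$ invariant curve are prescribed by $r$ and $d$, and radial singularities with the right count do not arise for the neighborhood inherited from $\Pj^2$ after a single blow-down.

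The main obstacle I anticipate is making precise the claim that the relevant foliation datum is a genuine biholomorphic invariant of the germ of the pair $(S_i,C_i)$ rather than of some larger ambient structure, and controlling the extension of a locally given foliation on $S_1$ to the full neighborhood $V\simeq V_0$ where the $\Pj^2$-classification applies. I would handle this by working formally along $C_i$, encoding the embedding through its Ueda-type invariants and the conormal exponents, and by invoking the uniqueness of the convergent normal form of a radial singularity to rule out the required configuration on $S_1$. If a direct foliation-theoretic obstruction proves delicate, a fallback is to separate the two germs by comparing their spaces of meromorphic or formal first integrals, or the dimension of the space of foliations leaving $C_i$ invariant, which the presence of $\mc F_2$ bounds from below on $S_2$ and which the plane-curve neighborhood bounds from above on $S_1$.
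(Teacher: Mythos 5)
Your overall strategy --- use the foliation $\mathcal F_2$ as the distinguishing datum, transport it through a hypothetical biholomorphism $\phi:S_2\to S_1$, blow up the contracted point so as to land in a neighborhood of the plane curve, extend the foliation to $\mathbb P^2$, and derive a numerical contradiction from the degree/singularity count --- is exactly the paper's argument in \emph{one} of its two cases. But there is a genuine gap: you never distinguish according to whether the blown-down point $Q=Q(D)\in S_1$ is, or is not, one of the singular points $Q_j=\phi(P_j)$ of the transported foliation, and your numerical obstruction fails completely in the first case. If $Q=Q_1$, blowing up $Q$ turns that radial singularity into a \emph{regular} foliation transverse to the exceptional $(-1)$-curve, so the extended foliation $\tilde{\mathcal F}_1$ on $\mathbb P^2$ has only $d^2$ radial singularities on the degree-$d$ curve $C$. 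Brunella's formula then reads $(\deg\tilde{\mathcal F}_1+2)\,d=d^2+d^2$, i.e.\ $\deg\tilde{\mathcal F}_1=2d-2$, which is perfectly consistent: pencils $\{F=cG\}$ with $\deg G=\deg F=d$ realize exactly this configuration, the $d^2$ transverse base points being radial singularities. So your claim that ``radial singularities with the right count do not arise'' is false in this case; in fact the stronger statement you aim at --- that $S_1$ carries \emph{no} foliation leaving $C_1$ invariant with $l$ radial singularities --- cannot be established by index computations and may simply fail (a foliation-compatible gluing produces such an $S_1$). Note also that your Camacho--Sad computation is vacuous (both sides equal $l$ automatically, since a radial point has index $1$), and your fallback ``incompatible with non-algebraizability'' cannot work, since $S_2$ is itself non-algebraizable yet carries $\mathcal F_2$.

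The paper resolves the delicate case by an argument of a different nature, which exploits the structure of $S_2$ rather than an obstruction on $S_1$: writing $\tilde{\mathcal F}_1$ in homogeneous coordinates as $\Omega=G\,dF+F\eta$ with $G$ of the same degree as $F$, the zeros of $G$ on $C$ are the remaining singularities, and the level curves of $G/F$ blow down to an infinite family of compact curves in $S_1$ passing through all the points $Q_1,\dots,Q_l$. Transporting this family by $\phi^{-1}$ to $S_2$ and blowing up back to the $\mathbb P^1$-bundle produces infinitely many pairwise disjoint sections, contradicting the non-triviality of that bundle. Without this second argument (or some substitute for it), your proof covers only the case $Q\notin\{Q_1,\dots,Q_l\}$ and is incomplete.
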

\begin{proof}
Suppose that $\phi: S_2\rightarrow S_1$ is a biholomorphism; we call $\mathcal F_1=\phi(\mathcal F_2)$ and $Q_j=\phi (P_j)$. Two possibilities arise.

\noindent {\bf (a)}  $Q\notin \{Q_1\dots Q_l\}$. The blow-up of ${\mathcal F}_1$ at $Q$ leads to a foliation $\tilde{\mathcal F}_1$ defined in a neighborhood of $C\subset \mathbb P^2$ (and a fortiori in $\mathbb P^2$) with $l$ radial singularities above $Q_1\dots Q_l$ and an extra singularity which is a zero of order 1 of $\tilde {\mathcal F}_1$ along $C$ (locally over $Q$ we have a local expression $tx=c$ for $\tilde {\mathcal F}_1$).

It follows from Brunella's formula (\cite[Chapter 2, Proposition 3]{Br})

$$
(deg(\tilde {\mathcal F}_1)+2).d= d^2+1+d^2
$$

\noindent which is a contradiction.

\noindent {\bf (b)} we suppose now that $Q\in \{Q_1,\dots, Q_l\}$, say $Q=Q_1$. The foliation $\tilde {\mathcal F}_1$ has $C$ as a invariant curve that contains $d^2$ radial singularities (the blow-up at $Q_1$  produces a regular foliation transverse to the (-1) rational curve). In homogeneous coordinates  $\tilde {\mathcal F}_1$ is defined by a 1-form $\Omega$ and $C$ by a homogeneous polynomial $F=0$ related by
$$
\Omega=GdF+F\eta
$$
\noindent where $\eta$ is also a holomorphic 1-form and $G$ is a homogeneous polynomial of the same degree as $F$. Clearly $G=0$ exactly at the singularities above $Q_2,\dots, Q_l$. The level curves of $\dfrac{G}{F}$, after blow-down, become a family of compact curves in $S_1$ passing transversely through all the points $Q_1,\dots, Q_l$. This family can be transported to $S_2$ and blown-up to the $\mathbb P^1$-bundle, producing an infinite number of sections 2 by 2 disjoints; this is a contradiction since the bundle is not trivial.
\end{proof}

We proceed now to compare two non algebraizable surfaces constructed as in Section \ref{Examples-in-positive} when $C$ is smooth. Let us repeat the construction: we have a plane algebraic curve $C$ of degree greater or equal to 2. At some point $Q \in C$ we glue a rational (-1) curve $D$ transverse to $C_0$ getting a surface $\hat S$. The surface $S$ we are interested in is the blow down of $D$ to a point $Q(D)$. Suppose now that we have another surface  $S^{\prime}$ obtained from the same procedure: we glue a (-1) rational curve $D^{\prime}$ to $C_0$ at a point $Q^{\prime}$  in order to get a surface ${\hat S}^{\prime}$ and blow down $D^{\prime}$ to a point $Q^{\prime}(D^{\prime})\in S^{\prime}$.

\begin{prop}  Any biholomorphic equivalence between $S$ and $S^{\prime}$ that preserves $C$ takes $Q(D)$ to $Q^{\prime}(D^{\prime})$.
\end{prop}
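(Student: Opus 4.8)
The plan is to prove this by contradiction, exploiting the meromorphic-extension machinery that underlies the whole section. Suppose $\phi: S \to S'$ is a biholomorphism preserving $C$ (i.e. $\phi(C) = C'$, both copies of $C_0$) but with $\phi(Q(D)) \neq Q'(D')$. Recall that $S$ is the blow-down of $\hat S$ along $D$, and $Q(D)$ is the image point; similarly for $S'$. The key observation is that $\phi$ must carry the germ of the singular/special structure at $Q(D)$ to a generic point of $C'$, and this mismatch should be detectable by pulling back to the blow-ups $\hat S, \hat S'$ and invoking the rigidity coming from Ivashkovich's extension theorem.

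First I would lift $\phi$ to the blow-ups. Since blowing down $D$ produces a smooth point $Q(D)$ on $S$, and $\phi$ is a biholomorphism, $\phi$ induces a biholomorphism between punctured neighborhoods, and after blowing up $Q(D)$ on $S$ and $\phi(Q(D))$ on $S'$ one gets a biholomorphism $\hat\phi$ between the corresponding blown-up surfaces. If $\phi(Q(D)) \neq Q'(D')$, then the exceptional curve $D = E_{Q(D)} \subset \hat S$ is carried by $\hat\phi$ to the exceptional curve over $\phi(Q(D))$, which is a point of $C'$ distinct from $Q'(D')$. The crucial structural asymmetry is this: on $\hat S$, the curve $D$ is a $(-1)$-curve meeting $C$ transversally at the special point coming from the original gluing, whereas over a generic point $\phi(Q(D)) \neq Q'(D')$ of $C'$ in $\hat S'$, blowing up produces an exceptional $(-1)$-curve, but the ambient germ near $C'$ away from $Q'(D')$ is biholomorphic to a plane-curve neighborhood $V_0 \subset \mathbb P^2$, i.e. it is algebraizable as a germ.

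Next I would apply Ivashkovich's theorem exactly as in the proof of Theorem \ref{teorema-principal}. The neighborhood of $C'$ in $S'$ away from $Q'(D')$ is biholomorphic to a tubular neighborhood $V_0$ of $C_0$ in $\mathbb P^2$, so a neighborhood of $C$ in $S$ minus the special point gets identified, via $\phi$, with such a $V_0$; one then argues that the embedding of this plane-curve neighborhood extends meromorphically to $\mathbb P^2 \dashrightarrow \hat S$, and the branch $b$ of the preimage of $D$ through $Q(D)$ must lie on an algebraic curve $B_0 \subset \mathbb P^2$. Because $\deg C_0 \geq 2$, the curve $B_0$ meets $C_0$ at a second point, producing a second intersection of (the image of) $D$ with $C$ on $S$ — but $D \cap C$ is the single transversal point $Q(D)$, exactly the contradiction engineered in Theorem \ref{teorema-principal}. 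The point $\phi(Q(D))$ being distinct from $Q'(D')$ is what lets us run the extension argument with $Q(D)$ treated as an interior-of-$V_0$ point rather than as the distinguished gluing point, which is where the hypothesis $\phi(Q(D)) \neq Q'(D')$ gets used.

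The main obstacle I anticipate is making precise the claim that the germ of $S'$ at $C'$ away from $Q'(D')$ really is the plane-curve germ $V_0$, and checking that the distinguished point $Q(D)$ really does behave like an ordinary point of $V_0$ under $\phi$ so that Ivashkovich's theorem applies with the correct Stein-complement hypotheses (one needs $\mathbb P^2 \setminus C_0$ Stein and $K_0 := U_0 \setminus V_0$ to have connected complement, as in Theorem \ref{teorema-principal}). A careful bookkeeping of which point is the gluing point on each side — and verifying that $\phi$ carrying $Q(D)$ off of $Q'(D')$ forces a genuine mismatch of the two distinguished branches rather than merely a coordinate change — is the delicate part; once that is set up, the degree-$\geq 2$ second-intersection argument closes the proof verbatim as in Theorem \ref{teorema-principal}.
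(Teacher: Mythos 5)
Your strategy has a fatal gap at its central step: the appeal to Ivashkovich's extension theorem. In the proof of Theorem \ref{teorema-principal}, the extension $I_0:\mathbb P^2\dashrightarrow \hat S$ exists because the target $\hat S$ is \emph{projective} --- it is precisely the projective surface furnished by the contradiction hypothesis ``$S$ is algebraizable'', and compactness/projectivity of the target is as essential to Ivashkovich's theorem as the Stein-type conditions on the domain side. In the present proposition there is no algebraizability hypothesis to contradict: both $S$ and $S'$ (and their blow-ups) are already known to be non-algebraizable, non-compact surfaces, so the map you propose to extend, $\mathbb P^2\dashrightarrow \hat S$ with $\hat S$ the blow-up of $S$, has no projective target and the extension step cannot even be formulated. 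Your hypothesis $\phi(Q(D))\neq Q'(D')$ produces no projective surface anywhere; and a holomorphic map into a non-compact surface simply need not extend (already the identity $V_0\to V_0$ admits no meromorphic extension $\mathbb P^2\dashrightarrow V_0$). A second, related defect: when $\phi(Q(D))\neq Q'(D')$, what $\phi$ actually gives you is an identification of germs along the \emph{punctured} curve (neighborhoods pinching at the removed point) with a germ of $\mathbb P^2$ along $C_0$ minus a point --- not a map defined on a tubular neighborhood $V_0$ of the whole compact curve $C_0$ --- so the hypotheses fail on the domain side as well. Note finally that if this machinery could be run without a projective ambient surface, the argument of Theorem \ref{teorema-principal} would ``prove'' that the surfaces $S,S'$ themselves do not exist, which is absurd since the paper constructs them.

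The paper's proof is of a completely different and elementary nature, with no extension theory at all. Take a linear pencil in $\mathbb P^2$ with base point off $C_0$; it induces a fibration on a neighborhood of $C$ in $\hat S$, which one may choose so that the fiber through $Q$ is transverse to $C$ and $D$. After blowing down $D$, the image in $S$ of every fiber that crossed $D$ passes through $Q(D)$, so $Q(D)$ is the \emph{unique} point of $S$ lying on infinitely many images of fibers; likewise $Q'(D')$ is the unique such point of $S'$. A biholomorphism preserving $C$ lifts to a biholomorphism between neighborhoods of $C$ in $\hat S$ and $\hat S'$ and carries one such family of curves to a family of the same kind; since the distinguished points are characterized intrinsically by the property of lying on infinitely many curves of the family, $\phi$ must send $Q(D)$ to $Q'(D')$. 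If you want to salvage your write-up, it is this counting characterization --- not meromorphic extension --- that does the work.
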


\begin {proof} 
We take a fibration over $C$ in a neighborhood of this curve in $\hat S$; evidently there exists a finite number of points of tangency between the 
fibration and $C$ (we may assume that the tangency order is 1 at those points).We may also suppose that the fiber through $Q$ is transverse to $D$ and $C$. The existence of this fibration follows, for example, by taking a linear pencil in ${\mathbb P}^2$ with base point not in $C$. We look now the images of the fibers in $S$; there is a special feature as an infinite number of them pass thouroug $Q(D)$.

A biholomorphis between $S$ and $S^{\prime}$ can be lifted to a biholomorphism between neigh\-bor\-hoods of $C$ in $\hat S$ and ${\hat S}^{\prime}$, and takes a fibration as above over $C$ in $\hat S$ to a fibration over $C$ in ${\hat S}^{\prime}$ with the same properties (in fact, we need that the fiber over $Q^{\prime}$ is also transverse to $C$ and $D^{\prime}$, but this can be arranged by changing the pencil).

Finally, since $Q(D)$ and $Q^{\prime}(D^{\prime})$ are the only points contained in an infinite number of images of the fibers, we conclude that one is sent to the another one by the biholomorphism.
\end{proof}

As an application, in the case $\deg C \geq 3$ we see that there exists a $1$-parameter family of different non alge\-brai\-za\-ble surfaces (up to biholomophisms). It is enough to observe that there exists only a finite number of possible images of $Q(D)$ under the group of automorphisms of $C$ (this group is finite by Hurwitz's Theorem).\\

The above discussion applies to the non algebraizable surface that appear in Section 5 (this time the fibrations have no tangency points). The result is the same as before.

\thanks{ {\bf Acknowledgements:} 
The authors thank Brazilian-French Network in Mathematics and
CAPES/COFECUB Project Ma 932/19 “Feuilletages holomorphes et intéractions avec la géométrie”. 
Falla Luza acknowledges support from CNPq (Grant number 402936/2021-3).
Loray is supported by CNRS and  The Centre Henri Lebesgue, program ANR-11-LABX-0020-0.}

\end{document}